\documentclass[11pt,reqno]{amsart}

\usepackage[a4paper,left=35mm,right=35mm,top=30mm,bottom=30mm,marginpar=25mm]{geometry}

\usepackage{amsmath}
\usepackage{amssymb}
\usepackage{amsthm}
\usepackage{eurosym}
\usepackage{graphicx}
\usepackage{epsfig}
\usepackage{hyperref}
\usepackage{dsfont}
\usepackage[displaymath,mathlines]{lineno}
\usepackage{subfigure}
\allowdisplaybreaks

\usepackage{hyperref}

\usepackage{ifthen}
\usepackage{ esint }
\makeindex

\newcommand{\Rr}{{\mathbb{R}}}

\newcommand{\Tt}{{\mathbb{T}}}

\def\leq{\leqslant}
\def\geq{\geqslant}

\numberwithin{equation}{section}

\newtheoremstyle{thmlemcorr}{10pt}{10pt}{\itshape}{}{\bfseries}{.}{10pt}{{\thmname{#1}\thmnumber{
#2}\thmnote{ (#3)}}}
\newtheoremstyle{thmlemcorr*}{10pt}{10pt}{\itshape}{}{\bfseries}{.}\newline{{\thmname{#1}\thmnumber{
#2}\thmnote{ (#3)}}}
\newtheoremstyle{defi}{10pt}{10pt}{\itshape}{}{\bfseries}{.}{10pt}{{\thmname{#1}\thmnumber{
#2}\thmnote{ (#3)}}}
\newtheoremstyle{remexample}{10pt}{10pt}{}{}{\bfseries}{.}{10pt}{{\thmname{#1}\thmnumber{
#2}\thmnote{ (#3)}}}
\newtheoremstyle{ass}{10pt}{10pt}{}{}{\bfseries}{.}{10pt}{{\thmname{#1}\thmnumber{
A#2}\thmnote{ (#3)}}}

\theoremstyle{thmlemcorr}
\newtheorem{theorem}{Theorem}
\numberwithin{theorem}{section}
\newtheorem{lemma}[theorem]{Lemma}

\newtheorem{proposition}[theorem]{Proposition}

\theoremstyle{thmlemcorr*}
\newtheorem{theorem*}{Theorem}
\newtheorem{lemma*}[theorem]{Lemma}
\newtheorem{corollary*}[theorem]{Corollary}
\newtheorem{proposition*}[theorem]{Proposition}
\newtheorem{problem*}[theorem]{Problem}
\newtheorem{conjecture*}[theorem]{Conjecture}

\theoremstyle{defi}

\theoremstyle{remexample}
\newtheorem{remark}[theorem]{Remark}

\newtheorem{teo}[theorem]{Theorem}
\newtheorem{lem}[theorem]{Lemma}

\newtheorem{cor}[theorem]{Corolary}

\theoremstyle{ass}

\usepackage{color}

\begin{document}

\title{One-dimensional, forward-forward  mean-field games with congestion}
 
\author{Diogo A.
        Gomes}         \address[D. A. Gomes]{
        King Abdullah University of Science and Technology (KAUST), CEMSE Division , Thuwal 23955-6900. Saudi Arabia.}

\author{Marc Sedjro}

\address[Marc Sedjro]{
        King Abdullah University of Science and Technology (KAUST), CEMSE
        Division , Thuwal 23955-6900. Saudi Arabia.}

\keywords{Mean-field games; systems of conservation laws; convergence to equilibrium; Hamilton-Jacobi equations; transport equations; Fokker-Planck equations}
\subjclass[2010]{} 

\thanks{
        The authors were supported by King Abdullah University of Science and Technology (KAUST) baseline and start-up funds. 
}
\date{\today}

\begin{abstract}
Here, we consider one-dimensional forward-forward mean-field games (MFGs)
with congestion, which 
were introduced to approximate stationary
MFGs. 
We use methods from the theory 
of conservation laws to examine the qualitative properties
of these games. First, by computing Riemann invariants and corresponding invariant regions,  
we develop a method to prove lower bounds for the density.
Next, by combining the lower bound with an entropy function, we prove 
the existence of global solutions for parabolic forward-forward MFGs. 
Finally, we construct traveling-wave solutions, which 
settles in a negative way the convergence problem
for 
forward-forward MFGs.
A similar technique gives the existence of time-periodic solutions for non-monotonic MFGs. 
\end{abstract}

\maketitle

\section{Introduction}
 Mean-field games (MFGs)  model competitive interactions in large  populations in which the agents' actions depend on statistical information about the population. These games are modeled by a Hamilton-Jacobi equation coupled with a Fokker-Planck equation. An important class of MFGs concerns congestion problems, where
 the agents' motion  is hampered in high-density areas.
    
 In one-dimension, the congestion problem is the system
 \begin{equation}\label{eq: cbf mfg}
\begin{cases}
-u_t+ m^{\alpha}H\left( \frac{u_x}{m^{\alpha}}\right) =\varepsilon u_{xx}  + g(m)\\
m_t-\left(H'\left( \frac{u_x}{m^{\alpha}}\right) m\right)_x= \varepsilon m_{xx}, 
\end{cases}
\end{equation}
together
 with  terminal-initial conditions;  we prescribe the initial value of $m$ at $t=0$ and the terminal value of $u$, at $t=T$:
 \begin{equation}
 \label{itc}
 \begin{cases}
 u(x,T) = u_T(x) \\
 m(x,0) = m_0(x).
 \end{cases}
 \end{equation}
Here, the Hamiltonian, $H:\Rr\to \Rr$, $H\in C^2$, is convex and bounded by below, the coupling, $g:\Rr^+\to \Rr$, $g\in C^1$, is a monotone increasing function, the viscosity coefficient, $ \varepsilon$, is a non-negative parameter and $\alpha$, $0<\alpha<2$ is the congestion exponent.   
The space variable, $x$, lies on the one-dimensional torus, $\Tt$, identified with $[0,1]$; the time variable, $t$, belongs to $[0,T]$ for some terminal time, $T>0$. The unknowns in the above system are  $u:\Tt\times [0,T]\to \Rr$ and $m:\Tt\times [0,T]\to \Rr^+$, 
where for each $t>0$, $m$ is a probability density; that is, 
\[
\int_{\Tt} m(x, t)dx =1. 
\]
In MFGs, each agent follows a path determined by a control problem.  
For $\varepsilon=0$, these paths are deterministic, whereas for $ \varepsilon>0$ they are subject to random noise. If  $\varepsilon=0$ then \eqref{eq: cbf mfg} is a  {\em first-order MFG} otherwise the system is a {\em parabolic MFG}.
For an agent  at  $x\in \Rr$ at time $t$, the quantity $u(x,t)$ is the value function of a control problem in which  
 the  Hamiltonian, $H\in C^2$, determines the cost and $g\in C^1$ provides a coupling between each agent and the mean field, $m$. The evolution of $m$ through the Fokker-Planck equation, the second equation in (\ref{eq: cbf mfg}), 
  also depends on $H$. 
  
MFGs, introduced in  \cite{ll1}, were investigated extensively in the last few years.  Numerous results on the existence and uniqueness of solution are now available. For example, the parabolic problem was considered for strong solutions in \cite{GPim2, GPim1, ll2} and weak solutions in \cite{ ll2, porretta2}. The first-order problem was tackled in  \cite{Cd2, GraCard} and weak solutions are obtained. As for the stationary problem, classical and weak solution were sought in \cite{GMit,GP,GPat,GPatVrt}. In both standard and stationary problems, the uniqueness of solution is guaranteed by monotonicity of the coupling $g$. The congestion problem was first addressed in \cite{LCDF}  and later, other approaches such as density constraints and nonlinear mobilities were used in \cite{MR3199781, FrS, San12}.  The  existence of smooth solutions for a small terminal time was discussed in \cite{GVrt2} and in \cite{Graber2}.   
  
 {\em Forward-forward MFGs}  result from reversing the time in the Hamilton-Jacobi equation in \eqref{eq: cbf mfg}.
 Here, we focus on the forward-forward  MFGs with congestion:
 \begin{equation}\label{eq: foba mfg system2}
 \begin{cases}
 u_t+ m^{\alpha}H\left( \frac{u_x}{m^{\alpha}}\right) =\varepsilon u_{xx}  + g(m)\\
 m_t-\left(H'\left( \frac{u_x}{m^{\alpha}}\right) m\right)_x= \varepsilon m_{xx}
 \end{cases}
 \end{equation}
with the  {\em initial conditions}:  
 \begin{equation}\label{ini-ini}
 \begin{cases}
 u(x,0) = u_0(x) \\
 m(x,0) = m_0 (x).
 \end{cases}
 \end{equation}

In \cite{DY}, 
the authors propose
the forward-forward MFG model
and study numerically its convergence to 
a stationary MFG. 
This scheme relies on the parabolicity in (\ref{eq: cbf mfg}) to force 
the long-time convergence to a stationary solution.
In \cite{GPff}, the authors studied
parabolic forward-forward MFGs and proved the existence of a solution. Next, in  \cite{GomNurSed} using the entropy method, the convergence for one-dimensional forward-forward MFGs without congestion was proven. Additionally, there, the authors compute entropies for first-order problems and establish connections between these models and certain nonlinear wave equations from elastodynamics. 

In \cite{GPff}, the authors proved the existence and regularity for parabolic forward-forward MFGs with  subquadratic Hamiltonians. In \cite{llg2}, the authors investigated forward-forward MFGs  with logarithmic coupling in the framework of eductive stability.  The large-time convergence was studied in \cite{GomNurSed} for one-dimensional parabolic problems and numerical evidence from  \cite{AcCiMa} and \cite{Ci} suggests that convergence holds.

The main contributions of this paper are as follows. First, for quadratic Hamiltonians, we convert the forward-forward problem into a system of conservation laws and compute new convex entropies (Lemma \ref{elem}) and Riemann invariants (Lemmas \ref{lem:riemann invariant z} and \ref{lem:riemann invariant w}).
These Riemann invariants give lower bounds for the density, $m$, and, 
for parabolic MFGs, these bounds combined with an entropy estimate gives the existence of a classical global solution (Theorem \ref{mteo}). Finally, by computing 
traveling-wave solutions, we prove that forward-forward MFGs may fail to 
converge to a stationary solution. With a similar method, we construct traveling waves for non-monotonic MFGs.

\section{Systems of conservation laws and first-order, forward-forward MFGs}

Here, we study the following forward-forward MFG with congestion and a quadratic Hamiltonian: 
\begin{equation}\label{eq: cff mfg }
\begin{cases}
u_t+\frac{(p+u_x)^2}{2m^{\alpha}}=0,\\
m_t-\left( \frac{p+u_x}{m^{\alpha-1}}\right)_x=0,
\end{cases}
\end{equation}
with $p\in \Rr$. 
 Assuming enough regularity, we differentiate the first equation with respect to $x$ and set $v=p+u_x$. We thus obtain the following system of conservation laws:
\begin{equation}
\label{eq: cff mfg_ cl}
\begin{cases}
v_t+\left( \frac{v^2}{2m^{\alpha}}\right) _x=0,\\
m_t-\left( \frac{v}{m^{\alpha-1}}\right)_x=0.
\end{cases}
\end{equation}
\subsection{Existence of convex entropies}
First, we construct convex entropies for \eqref{eq: cff mfg_ cl}. We recall that $(\eta, q)$ is an entropy/entropy-flux pair for  (\ref{eq: cff mfg_ cl}) if 
\begin{equation}\label{eq: entropy/entflux 0}
 \nabla\eta \nabla F=\nabla q,
\end{equation}
where
$$
F(v,m)= \left(\frac{v^2}{2m^{\alpha}},-\frac{v}{m^{\alpha-1}} \right)
$$
is the flux function in \eqref{eq: cff mfg_ cl}.

A direct computation shows that $\eta$ solves \eqref{eq: entropy/entflux 0}
if and only if it satisfies:

\begin{equation}
\label{eq: entropy/entflux 1}
\frac{\alpha v^2}{2m^{\alpha+1}}  \eta_{vv}-\frac{1}{m^{\alpha-1}} \eta_{mm} +\frac{(2-\alpha) v }{m^{\alpha}} \eta_{vm}=0.
\end{equation}

In the next lemma, we investigate the existence of entropies and determine conditions under which these  entropies are convex.  
\begin{lemma}
\label{elem}
        The system \eqref{eq: cff mfg_ cl} has the following family of entropies:
\begin{equation}
        \label{eq:entropy}
        \eta(v,m)= v^a m^b,
\end{equation} 
where $a, b$ satisfy 
\begin{equation}
\label{eq: cond on a,b for entropy}
\alpha a (-a+2 b+1)+2 b (-2 a+b-1)=0.
\end{equation}
Moreover,   if, additionally $a$ is even and
 \begin{equation}
 \label{eq: cond on a,b for convexity}
 a(a-1)\geq 0 \quad\hbox{and}\quad ab(a+b-1)\leq 0, 
 \end{equation}
then $\eta$ as defined in \eqref{eq:entropy} and \eqref{eq: cond on a,b for entropy}  is convex.
\end{lemma}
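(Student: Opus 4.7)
The plan is to proceed by direct substitution of the ansatz $\eta(v,m)=v^a m^b$ into the linear second-order PDE \eqref{eq: entropy/entflux 1}, and then to verify convexity using Sylvester's criterion on the resulting Hessian. Both parts are essentially explicit algebraic calculations; the only mildly subtle point will be the role of the parity hypothesis on $a$.

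For the first claim, I would compute
\[
\eta_{vv}=a(a-1)v^{a-2}m^{b},\qquad \eta_{mm}=b(b-1)v^{a}m^{b-2},\qquad \eta_{vm}=ab\,v^{a-1}m^{b-1},
\]
and plug them into \eqref{eq: entropy/entflux 1}. Each of the three terms carries the common monomial factor $v^{a}m^{b-\alpha-1}$; on the open set $\{v\neq 0,\,m>0\}$ I can divide this factor out, which reduces the PDE to a scalar polynomial identity in $a$, $b$, $\alpha$. A straightforward rearrangement shows that this identity is equivalent to \eqref{eq: cond on a,b for entropy}, and by continuity the entropy equation then holds on all of $\Rr\times\Rr^{+}$.

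For the convexity claim I would apply Sylvester's criterion to the Hessian $D^{2}\eta$. The leading entry $\eta_{vv}=a(a-1)v^{a-2}m^{b}$ is nonnegative for all $(v,m)\in\Rr\times\Rr^{+}$ precisely when $a(a-1)\geq 0$, \emph{provided} $v^{a-2}\geq 0$ holds irrespective of the sign of $v$; this is exactly where the hypothesis that $a$ is even enters (and, in fact, it is also what makes $v^{a}$ well-defined in the first place when $v<0$, a regime that cannot be excluded since $v=p+u_{x}$ ranges over $\Rr$). For the determinant, a one-line cancellation gives
\[
\det D^{2}\eta=\eta_{vv}\eta_{mm}-\eta_{vm}^{2}=ab\bigl[(a-1)(b-1)-ab\bigr]v^{2a-2}m^{2b-2}=-ab(a+b-1)\,v^{2a-2}m^{2b-2}.
\]
Since $a$ is even, $v^{2a-2}\geq 0$ for all $v$, so the determinant is nonnegative if and only if $ab(a+b-1)\leq 0$. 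Together these two conditions are precisely \eqref{eq: cond on a,b for convexity}, finishing the argument.

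There is no genuine obstacle: the content of the lemma is contained in the two algebraic calculations above. The one conceptual point worth emphasizing in the writeup is that the parity restriction on $a$ is not cosmetic but is forced by the need for $\eta$ to be real-valued \emph{and} convex on the full range of $v$.
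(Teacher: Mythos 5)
Your proof is correct and follows essentially the same route as the paper: direct substitution of $\eta=v^am^b$ into \eqref{eq: entropy/entflux 1} to reduce the entropy equation to the polynomial identity \eqref{eq: cond on a,b for entropy}, followed by Sylvester's criterion on the explicit Hessian, with the parity of $a$ used exactly as you describe. Your version merely spells out the cancellation $\eta_{vv}\eta_{mm}-\eta_{vm}^2=-ab(a+b-1)v^{2a-2}m^{2b-2}$ that the paper leaves implicit.
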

\proof
By inspection, we see that  $\eta$ as defined in \eqref{eq:entropy} satisfies (\ref{eq: entropy/entflux 1}) if and only if \eqref{eq: cond on a,b for entropy} holds.

Note that 
\[
\ D^2\eta=\left( 
\begin{array}{cc}
(a-1) a m^b v^{a-2} & a b m^{b-1} v^{a-1} \\
a b m^{b-1} v^{a-1} & (b-1) b m^{b-2} v^a \\
\end{array}\right).
\]
Using Sylvester's criterion, that $a$ is even, and that $m>0$, 
we derive \eqref{eq: cond on a,b for convexity}.
\endproof

\endproof
\begin{remark}
\label{r22}	
For $0<\alpha<2$, the conditions \eqref{eq: cond on a,b for entropy} and \eqref{eq: cond on a,b for convexity} hold  if
$a<0$ or  $a>1$ and with $b$ given by

\begin{equation}
\label{b for convexity of eta}
b=b(\alpha, a):=\frac{ 1 + 2 a - a \alpha - \sqrt{
1 + 4 a - 4 a \alpha + a^2 (4 -2\alpha  + \alpha^2)}}{2}.
\end{equation}
We note that if $a>1$ then  $b<0$. Moreover, we have
\begin{equation}
\label{ limit of b/a}
\lim_{a\rightarrow\infty}\frac{-b(\alpha, a)}{a}=\theta(\alpha), 
\end{equation}
with
\begin{equation}
\label{tda}
\theta(\alpha)=
\frac{1}{2} \left(\sqrt{\alpha ^2-2 \alpha +4}+\alpha -2\right)
\end{equation}
and, for $a>1$,
\begin{equation}
\label{comparison of b/a and theta}
\theta(\alpha) -\frac{b(\alpha, a)}{a}>0.
\end{equation}

\end{remark}
\subsection{Hyperbolicity and Genuinely nonlinearity}
Now, we show that \eqref{eq: cff mfg_ cl}
is a hyperbolic, genuinely nonlinear system of conservation laws. 
 To that end,  we compute the  Jacobian of $F$ and get
\begin{equation}
     DF(v,m) =\begin{pmatrix} 
     m^{-\alpha}v & -\dfrac{1}{2}\alpha m^{-1-\alpha} v^2\\
    -m^{1-\alpha}& -(1-\alpha)m^{-\alpha} v
\end{pmatrix}.
\end{equation}

\begin{proposition} \label{prop: eigenvalues and eigenvectors}
The system \eqref{eq: cff mfg_ cl} is strictly hyperbolic outside the set $\left\lbrace v\neq 0 \right\rbrace $.
More precisely, \eqref{eq: cff mfg_ cl}  has eigenvalues 
 \begin{equation}
 \label{EigenVal1}
 \lambda_1=\left(\alpha -  \sqrt{4-2\alpha+\alpha^2}\right) \frac{v}{2m^{\alpha}}
 \end{equation}
 and 
\begin{equation}
\label{EigenVal2}
 \lambda_2=\left(\alpha + \sqrt{4-2\alpha+\alpha^2}  \;\right) \frac{v}{2m^{\alpha}},
 \end{equation}
with corresponding eigenvectors
\begin{equation}
\label{EigenVect1}
 \mathbf{r}_1=\left( (2 -\alpha )v -\sqrt{4-2\alpha+\alpha^2} \;v ,-2m\right)
 \end{equation}
 and 
\begin{equation}
\label{EigenVect2}
 \mathbf{r}_2=\left((2 -\alpha )v+\sqrt{4-2\alpha+\alpha^2}\; v ,-2m\right). 
 \end{equation}
 Furthermore, for $0<\alpha<2$, the system \eqref{eq: cff mfg_ cl}
 is  genuinely nonlinear on
 \begin{equation}
 \label{adef}
 \mathcal{A}=\left\lbrace (v,m)\in\mathbb{R}^2: v>0,\; m>0\right\rbrace. 
 \end{equation}  
\end{proposition}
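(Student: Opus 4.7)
The proof is a direct calculation from the Jacobian $DF(v,m)$, organized in three steps.

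\textbf{Step 1: Eigenvalues and strict hyperbolicity.} First I would compute the characteristic polynomial $\det(DF - \lambda I) = \lambda^2 - (\operatorname{tr} DF)\lambda + \det DF$. A short calculation yields
\begin{equation*}
\operatorname{tr}DF = \frac{\alpha v}{m^{\alpha}}, \qquad \det DF = -\frac{(2-\alpha)\,v^{2}}{2\,m^{2\alpha}},
\end{equation*}
so the discriminant is
\begin{equation*}
(\operatorname{tr}DF)^{2}-4\det DF = \frac{v^{2}}{m^{2\alpha}}\bigl(\alpha^{2}-2\alpha+4\bigr).
\end{equation*}
Since $\alpha^{2}-2\alpha+4=(\alpha-1)^{2}+3>0$, the discriminant is strictly positive whenever $v\neq 0$, which at once gives the two real distinct eigenvalues \eqref{EigenVal1}--\eqref{EigenVal2} and hence strict hyperbolicity outside $\{v=0\}$.

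\textbf{Step 2: Eigenvectors.} For each $\lambda_{i}$ I would solve $(DF-\lambda_{i}I)\mathbf{r}_{i}=0$ by normalizing the second component to $r_{m}=-2m$, and reading the first component off the second row of $DF-\lambda_{i}I$:
\begin{equation*}
r_{v}^{(i)} = \frac{(-(1-\alpha)m^{-\alpha}v - \lambda_{i})(-2m)}{m^{1-\alpha}} = 2(1-\alpha)v + 2m^{\alpha}\lambda_{i}.
\end{equation*}
Substituting $2m^{\alpha}\lambda_{i} = \bigl(\alpha \mp \sqrt{4-2\alpha+\alpha^{2}}\bigr)v$ and simplifying gives exactly \eqref{EigenVect1}--\eqref{EigenVect2}. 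A one-line check with the first row of $DF-\lambda_{i}I$ would confirm the computation.

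\textbf{Step 3: Genuine nonlinearity on $\Aa$.} Writing $\lambda_{i}=c_{i}(\alpha)\,v\,m^{-\alpha}$ with $c_{i}(\alpha)=\bigl(\alpha\pm\sqrt{4-2\alpha+\alpha^{2}}\bigr)/2$, I would compute
\begin{equation*}
\nabla\lambda_{i}\cdot\mathbf{r}_{i} = \frac{c_{i}(\alpha)}{m^{\alpha}}\bigl(r_{v}^{(i)} + 2\alpha v\bigr) = \frac{c_{i}(\alpha)}{m^{\alpha}}\,v\bigl(2+\alpha \mp \sqrt{4-2\alpha+\alpha^{2}}\bigr).
\end{equation*}
The key algebraic observation is that $(2+\alpha)^{2}-(4-2\alpha+\alpha^{2})=6\alpha>0$ for $0<\alpha<2$, so the second factor never vanishes on $\Aa$; likewise $c_{i}(\alpha)\neq 0$ on $(0,2)$ since $c_{i}(\alpha)=0$ would force $\alpha^{2}=4-2\alpha+\alpha^{2}$, i.e.\ $\alpha=2$. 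Together with $v,m>0$ on $\Aa$, this yields $\nabla\lambda_{i}\cdot\mathbf{r}_{i}\neq 0$ for both $i=1,2$, which is exactly the genuine-nonlinearity condition.

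The proof is essentially an algebraic verification with no analytic subtlety; the only place where care is needed is the sign conventions in Step 2 (since $\sqrt{4-2\alpha+\alpha^{2}}\,v$ appears without absolute value, the labelling of $\lambda_{1},\lambda_{2}$ and the corresponding eigenvectors must be read consistently for $v>0$ and $v<0$). Restricting the genuine-nonlinearity statement to $\Aa=\{v>0,\,m>0\}$ makes this transparent.
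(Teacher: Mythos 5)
Your proposal is correct and follows essentially the same route as the paper: a direct computation of the eigenvalues of $DF$, the right eigenvectors normalized with second component $-2m$, and the quantities $\nabla\lambda_i\cdot\mathbf{r}_i$, whose non-vanishing on $\mathcal{A}$ for $0<\alpha<2$ gives genuine nonlinearity. Your factorization $\nabla\lambda_i\cdot\mathbf{r}_i=\frac{c_i(\alpha)}{m^{\alpha}}\,v\bigl(2+\alpha\mp\sqrt{4-2\alpha+\alpha^{2}}\bigr)$ together with the identity $(2+\alpha)^2-(4-2\alpha+\alpha^2)=6\alpha$ is just a slightly more transparent way of checking the sign conditions that the paper states by expanding the same expressions.
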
      
\begin{remark}
The set $ \mathcal{A}$ corresponds to the case where $p x+u(x)$ is increasing.  
Analogous results hold for decreasing functions. 
\end{remark}
\proof
Simple computations  show that $DF$ has eigenvalues given by \eqref{EigenVal1} and \eqref{EigenVal2}.  These are distinct if $v\neq 0$.

Thus, \eqref{eq: cff mfg }  is a strictly hyperbolic system of conservation laws
if $v\neq 0$. 
Next, we find the right eigenvectors corresponding to $\lambda_1$ and $\lambda_2$. Accordingly, 
 we determine $\mathbf{r}_i$, $i=1,2$, such that 
 \[
         DG^T \mathbf{r}_i=\lambda_i \mathbf{r}_i.
 \]
 Again, straightforward computations ensure that  $\mathbf{r}_1,$ $\mathbf{r}_2$ can be chosen as in \eqref{EigenVect1}  and \eqref{EigenVect2}.

Next, we compute
$$
        \nabla \lambda_1\cdot \mathbf{r}_1
        = \left(2+\alpha ^2  -\sqrt{(\alpha -2) \alpha +4}-\alpha \sqrt{(\alpha -2) \alpha +4}\right) v m^{-\alpha }
$$
and
\[
   \nabla \lambda_2\cdot \mathbf{r}_2
   =    \left(2+\alpha ^2+ \sqrt{(\alpha -2) \alpha +4}+\alpha  \sqrt{(\alpha -2) \alpha +4}\right) v m^{-\alpha }. 
\]

Finally, we observe that, for  $0<\alpha<2$,
we have 
\begin{equation}\label{eq : genuinly nonlinear 1}
\nabla \lambda_1\cdot \mathbf{r}_1\leq 0.
\end{equation}

 The equality in \eqref{eq : genuinly nonlinear 1} holds if and only if $v=0$. On the other hand, for any $\alpha\in \mathbb{R}$,
 \begin{equation}\label{eq : genuinly nonlinear 2}
 \nabla \lambda_2\cdot \mathbf{r}_2\geq 0.
 \end{equation}
Likewise, the equality in \eqref{eq : genuinly nonlinear 2} holds if and only if $v=0$. Thus, for 
 $0<\alpha<2$, the system is genuinely nonlinear on $\mathcal{A}$. 
 \endproof

\subsection{Riemann invariants}
Now, we compute Riemann invariants for the above system. Later, we show that solutions whose initial values take values in $\mathcal{A}$ remain in $\mathcal{A}$.
Set
 \begin{equation}\label{eq :  def A, B}
A(\alpha)=2 -\alpha  +\sqrt{4-2\alpha+\alpha^2}\quad\hbox{and}\quad B(\alpha)=2 -\alpha  -\sqrt{4-2\alpha+\alpha^2}
\end{equation}
 and  note that $A(\alpha)$ is a positive  for all $\alpha\in \mathbb{R}$ whereas $B(\alpha)$ is negative if and only $\alpha >0$.

  Moreover, 
 \begin{equation}
 B(\alpha)+2>0 \Longleftrightarrow\alpha<2.
 \end{equation}
 Define
 \[
\mathcal{S}_0:=\left\lbrace (s,\alpha): s<0,\; \alpha \geq 0 \right\rbrace
 \]
and 
  \[
   \mathcal{S}_1:=\left\lbrace (r,\alpha):   r<\frac{2 B(\alpha)}{B(\alpha)+2 } ,\; 0<\alpha<2\right\rbrace . 
  \]
 In the following Lemmas, we compute Riemann invariants
 for \eqref{eq: cff mfg_ cl}.
\begin{lemma}
        \label{lem:riemann invariant z}
        The family of functions 
        $$ z(v,m)= v^{\frac{s}{A(\alpha)}} m^{\frac{s}{2}},$$
        $s\in \mathbb{R}$ are Riemann invariants for \eqref{eq: cff mfg_ cl}. Moreover, if $(s,\alpha)\in \mathcal{S}_0$
        then $z$ is convex.
\end{lemma}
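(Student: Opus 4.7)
The plan is to verify directly that $z(v,m) = v^{s/A(\alpha)} m^{s/2}$ is constant along the integral curves of the eigenvector $\mathbf{r}_2$ from Proposition \ref{prop: eigenvalues and eigenvectors}, and then apply Sylvester's criterion to check convexity. First, I would observe that $\mathbf{r}_2 = (A(\alpha) v, -2m)$ using the definition \eqref{eq : def A, B} of $A(\alpha)$. Setting $k_1 = s/A(\alpha)$ and $k_2 = s/2$, a direct computation gives
\begin{equation*}
\nabla z \cdot \mathbf{r}_2 = k_1 v^{k_1 - 1} m^{k_2} \cdot A(\alpha) v + k_2 v^{k_1} m^{k_2 - 1} \cdot (-2m) = (s - s)\, v^{k_1} m^{k_2} = 0,
\end{equation*}
so $z$ is a Riemann invariant (associated with $\lambda_2$). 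The choice of the exponents $s/A(\alpha)$ and $s/2$ is precisely what makes the two contributions cancel, and the parameter $s \in \mathbb{R}$ is a free scaling.

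For the convexity claim, I would compute the Hessian of $z$. One finds
\begin{equation*}
D^2 z = \begin{pmatrix} k_1(k_1-1) v^{k_1 - 2} m^{k_2} & k_1 k_2 v^{k_1 - 1} m^{k_2 - 1} \\ k_1 k_2 v^{k_1 - 1} m^{k_2 - 1} & k_2(k_2-1) v^{k_1} m^{k_2 - 2} \end{pmatrix},
\end{equation*}
with determinant proportional to $k_1 k_2 (1 - k_1 - k_2)$, since the cross terms produce
\begin{equation*}
k_1(k_1-1) k_2(k_2-1) - k_1^2 k_2^2 = k_1 k_2 \bigl[(k_1 - 1)(k_2 - 1) - k_1 k_2\bigr] = k_1 k_2 (1 - k_1 - k_2).
\end{equation*}
By Sylvester's criterion, positivity of the Hessian reduces to the three inequalities $k_1(k_1 - 1) \geq 0$, $k_2(k_2 - 1) \geq 0$, and $k_1 k_2 (1 - k_1 - k_2) \geq 0$.

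Finally I would verify these three inequalities on $\mathcal{S}_0$. For $(s,\alpha) \in \mathcal{S}_0$ we have $s < 0$ and $\alpha \geq 0$; since $A(\alpha) > 0$ for all $\alpha \in \mathbb{R}$, both $k_1 = s/A(\alpha)$ and $k_2 = s/2$ are strictly negative. Thus $k_1(k_1 - 1)$ and $k_2(k_2 - 1)$ are products of two negative numbers and hence positive, while $k_1 k_2 > 0$ and $1 - k_1 - k_2 > 1 > 0$, giving the third inequality automatically. I do not anticipate any serious obstacle here: the first part is a straightforward calculation dictated by the structure of $\mathbf{r}_2$, and the convexity step is purely an exercise in signs once the Hessian is written out.
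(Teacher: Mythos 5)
Your proof is correct and follows essentially the same route as the paper: verify $\nabla z\cdot(A(\alpha)v,-2m)=0$ by direct computation and then check the signs of the Hessian's minors (the paper uses trace and determinant, you use principal minors --- equivalent for a symmetric $2\times 2$ matrix on $\{v,m>0\}$). The only discrepancy is one of labeling: the paper says the invariant corresponds to $\mathbf{r}_1$ but then writes and solves the equation $A(\alpha)v\,z_v-2m\,z_m=0$, i.e.\ the one for $\mathbf{r}_2=(A(\alpha)v,-2m)$, which is exactly the eigenvector you use.
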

\proof
We recall that $z$ is a Riemann invariant corresponding to  $r_1$ if
 \[
 \nabla z\cdot r_1= 0.
 \]
 This means that 
 \begin{equation}
         \label{eq : diif z 1}
         A(\alpha)v\partial _{v}z- 2m\partial_{m}z =0
 \end{equation}
for
\[
        A(\alpha)=2 -\alpha  +\sqrt{4-2\alpha+\alpha^2}.
\]
Setting $z(v,m)=a(v) b(m)$,  \eqref{eq : diif z 1} becomes
\[
 A(\alpha)v a'(v) b(m) =2m a(v) b'(m), 
 \]
or 
\[ A(\alpha)v\dfrac{a'(v)}{a(v)}= 2 m\dfrac{b'(m)}{b(m)}.
\]
       
Thus, $a(v)= v^{s/A(\alpha)}$ and $b(m)= m^{s/2}$   for $s\in \mathbb{R}$. Therefore, 
\[
z(v,m)= v^{\frac{s}{A(\alpha)}} m^{\frac{s}{2}}.
\]
 To study the convexity of $z$, we compute its Hessian:
  
\[
D^2z(v,m)=
        \begin{pmatrix} 
                \frac{s \left(- A(\alpha)+s\right) m^{s/2} v^{\frac{s}{A(\alpha)}-2}}{A(\alpha)^2}&\frac{s^2 m^{\frac{s}{2}-1} v^{\frac{s}{A(\alpha)}-1}}{2A(\alpha)} \\
                 \frac{s^2 m^{\frac{s}{2}-1} v^{\frac{\lambda}{A(\alpha)}-1}}{2 A(\alpha)}&\frac{1}{4} (s-2) s m^{\frac{s}{2}-2} v^{\frac{s}{A(\alpha)}}
        \end{pmatrix}. 
\]
First, we observe that the trace of \(D^2z(v,m)\) has the sign of

\[
s \left[s \left(A^2(\alpha) v^2+4 m^2\right)-2 A^2(\alpha) v^2-4 A(\alpha) m^2\right].
\]
Thus, the trace is positive if only if
\begin{equation}
\label{trace_z positive}
       s <0 \qquad\hbox{or}\qquad s> \frac{2 A^2(\alpha) v^2+4 A(\alpha) m^2}{A^2(\alpha) v^2+4 m^2}.
\end{equation}
The determinant of $D^2z(v,m)$ has the sign of

\begin{equation}
\label{det_z}
-s^2\left[(A(\alpha)+2) s-2 A(\alpha)\right].
\end{equation}
Hence, the determinant is positive if and only if 
\begin{equation}
\label{det_z positive}
     s<\frac{2 A(\alpha)}{A(\alpha)+2 }.
\end{equation}
 Observe that 
 
 \begin{equation}
 \label{comparison for convexity of z}
 0<\frac{2 A(\alpha)}{A(\alpha)+2 }<\frac{2 A^2(\alpha) v^2+4 A(\alpha) m^2}{A^2(\alpha) v^2+4 m^2}
 \end{equation}
 for  all $\alpha\in\mathbb{R}$. In view of \eqref{trace_z positive}, \eqref{det_z positive} and \eqref{comparison for convexity of z}, if $(s,\alpha)\in \mathcal{S}_0$,  then $z$ is convex.
 \endproof
 \begin{lemma}
        \label{lem:riemann invariant w}
        Let $0<\alpha<2$. The family of functions 
        $$ w(v,m)= v^{\frac{r}{B(\alpha)}} m^{\frac{r}{2}},$$
        $r\in \mathbb{R}$ are Riemann invariants for \eqref{eq: cff mfg_ cl}. Moreover, if $(r,\alpha)\in \mathcal{S}_1$ 
        then $w$ is convex.
 \end{lemma}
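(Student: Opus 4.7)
The approach is to mirror the proof of Lemma \ref{lem:riemann invariant z}, interchanging the roles of the two eigenvectors and of $A(\alpha)$ and $B(\alpha)$, while tracking the extra sign changes forced by $B(\alpha)<0$ on $0<\alpha<2$.

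First, I would establish the Riemann invariant property. Writing $\nabla w \cdot \mathbf{r}_2 = 0$, with $\mathbf{r}_2$ as in \eqref{EigenVect2}, and factoring out the common factor gives the linear first-order PDE
\[
B(\alpha)\, v\, \p_v w - 2 m\, \p_m w = 0.
\]
Separation of variables $w(v,m) = a(v) b(m)$ produces $B(\alpha)\, v\, a'(v)/a(v) = 2 m\, b'(m)/b(m) = r$ for a real parameter $r$, from which $a(v) = v^{r/B(\alpha)}$ and $b(m) = m^{r/2}$, proving the first claim.

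For convexity, I would compute the Hessian of $w$ and apply Sylvester's criterion. A direct calculation shows that $\det D^2 w$ has the sign of $-r^2\bigl[(B(\alpha)+2)r - 2 B(\alpha)\bigr]$, which is positive exactly when $(B(\alpha)+2)r < 2 B(\alpha)$. Since $0<\alpha<2$ gives $B(\alpha)+2 > 0$, this is equivalent to $r < 2 B(\alpha)/(B(\alpha)+2)$, i.e., $(r,\alpha)\in \mathcal{S}_1$; because the threshold is negative, this also forces $r < 0$.

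It remains to verify that the trace of $D^2 w$ is positive under the same condition. Its sign coincides with that of $r(r - r_0)$, where
\[
r_0 := \frac{2 B(\alpha)^2 v^2 + 4 B(\alpha) m^2}{B(\alpha)^2 v^2 + 4 m^2}.
\]
A direct computation gives
\[
r_0 - \frac{2 B(\alpha)}{B(\alpha)+2} = \frac{4 B(\alpha)^2 (v^2 + m^2)}{\bigl(B(\alpha)+2\bigr)\bigl(B(\alpha)^2 v^2 + 4 m^2\bigr)} > 0,
\]
so $r < 2 B(\alpha)/(B(\alpha)+2) < r_0$, and together with $r<0$ this makes both factors of $r(r-r_0)$ negative, hence the trace is positive. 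The main obstacle, and the reason this is not a purely cosmetic adaptation of the $z$ argument, is precisely this bookkeeping: unlike in the $z$ case, the threshold $r_0$ is not automatically of definite sign, so a two-step comparison of thresholds is needed in place of the direct inequality \eqref{comparison for convexity of z}.
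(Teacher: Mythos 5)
Your proof is correct and follows essentially the same route as the paper: separation of variables for the first\-/order PDE defining the Riemann invariant, then a Sylvester-type positivity check on the Hessian whose determinant condition reproduces exactly the defining inequality of $\mathcal{S}_1$. The only difference is in the second positivity check: you use trace plus determinant (mirroring the paper's proof of Lemma \ref{lem:riemann invariant z}, which forces your extra, correctly computed comparison $r_0>\tfrac{2B(\alpha)}{B(\alpha)+2}$), whereas the paper uses the first leading principal minor, whose threshold $B(\alpha)$ is independent of $(v,m)$ and satisfies $\tfrac{2B(\alpha)}{B(\alpha)+2}<B(\alpha)$ directly; both verifications are valid. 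One labeling slip, which the paper itself shares in the proof of Lemma \ref{lem:riemann invariant z}: with $\mathbf{r}_1,\mathbf{r}_2$ exactly as in \eqref{EigenVect1}--\eqref{EigenVect2}, the condition $\nabla w\cdot \mathbf{r}_2=0$ reads $A(\alpha)v\,\p_v w-2m\,\p_m w=0$, not $B(\alpha)v\,\p_v w-2m\,\p_m w=0$; the function $w=v^{r/B(\alpha)}m^{r/2}$ is in fact annihilated by $\mathbf{r}_1=(B(\alpha)v,-2m)$, so it is still a Riemann invariant of the system and the stated conclusion is unaffected.
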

 \proof If $w$ is a Riemann invariant  corresponding to  $\mathbf{r}_2$ then

\begin{equation}\label{eq : diif 2 1}
\nabla w\cdot \mathbf{r}_2= 0.
\end{equation}
Furthermore, we have \eqref{eq : diif z 1}
for $w(v,m)=a(v) b(m)$, with
 $a(v)= v^{r/B(\alpha) }$,  $b(m)= m^{r/2}$, 
and $r\in \mathbb{R}$. 

Moreover,
\[
D^2w(v,m)=\begin{pmatrix} 
 \frac{r \left(r- B(\alpha)\right) m^{r/2} v^{-\frac{r}{B(\alpha)}}}{\left(B(\alpha)\right)^2}&\frac{r^2 m^{\frac{r}{2}-1} v^{\frac{r}{B(\alpha)}-1}}{2 B(\alpha)}\\
   \frac{r^2 m^{\frac{r}{2}-1} v^{\frac{r}{B(\alpha)}-1}}{2 B(\alpha)}&\frac{1}{4} (r-2) r m^{\frac{r}{2}-2} v^{\frac{r}{B(\alpha)}}
\end{pmatrix}. 
\]
 Note that the first leading principal minor of $D^2w(v,m)$ has the same sign as 
 $$ r \left(r-B(\alpha)\right), $$
and is thus  positive if only if 
 \begin{equation}
 \label{trace_w positive}
 r < B(\alpha) \qquad\hbox{or}\qquad  r > 0.
 \end{equation}
 The determinant of $D^2w(v,m)$ has the sign of 
 
 \begin{equation}
 \label{det_w}
 -r^2\left[(B(\alpha)+2) r-2 B(\alpha)\right].
\end{equation}
    Thus, the determinant is positive if and only if 
    \begin{align*}
    r<\frac{2 B(\alpha)}{B(\alpha)+2 }.
    \end{align*}
    If $ (r,\alpha)\in \mathcal{S}_1$ then all the principal minors of the Hessian $D^2w(v,m)$ are positive. Accordingly, $w$ is convex.

\begin{figure}[ht]
        \centering
\includegraphics[width=5cm,height=5cm]{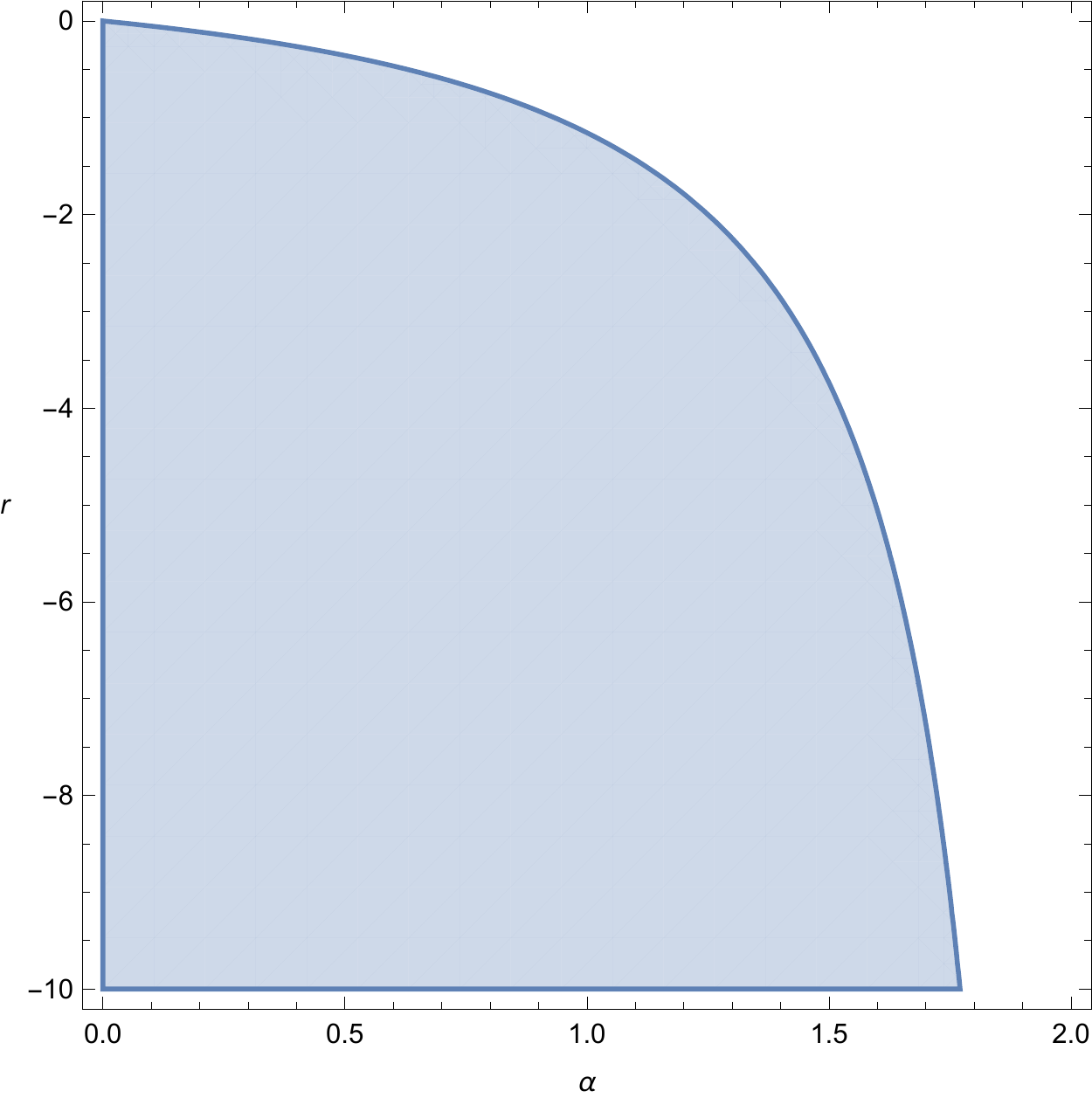}
        \caption{Domain $\mathcal{S}_1$.}
\end{figure}

\section{Parabolic forward-forward MFGs}

Now, we consider the parabolic forward-forward MFG corresponding to  (\ref{eq: cff mfg }): 
 \begin{equation}\label{eq: cff mfg vis}
 \begin{cases}
 v_t+\left( \frac{v^2}{2m^{\alpha}}\right) _x=\varepsilon v_{xx},\\
 m_t-\left( \frac{v}{m^{\alpha-1}}\right)_x=\varepsilon m_{xx},
 \end{cases}
 \end{equation} 
 with   $\varepsilon>0$ and initial conditions
  \begin{equation}
  \label{icc}
  \begin{cases}
  v(x,0) = v_0(x) \\
  m(x,0) = m_0(x),
  \end{cases}
  \end{equation}
  where $(v_0,m_0)$ takes values in  a compact subset, $\mathcal{K}$,  of $\mathcal{A}$, where, as before,  
 $$\mathcal{A}=\left\lbrace (v,m)\in\mathbb{R}^2: v>0,\; m>0\right\rbrace. $$
  
Standard PDE theory guarantees that  \eqref{eq: cff mfg vis}-\eqref{itc} has a unique classical solution $(v^{\varepsilon}, m^{\varepsilon})$ on $\Tt\times [0, T_{\infty})$ for some $0<T_{\infty}\leq \infty$.
Our goal is to prove that the maximal existence time is $T_{\infty}=\infty$. 

We recall the 
Riemann invariants from Lemmas \ref{lem:riemann invariant z} and  \ref{lem:riemann invariant w}
\begin{equation}
\label{fzm}
 z(v,m)= v^{\frac{s}{A(\alpha)}} m^{s}\quad \hbox{and}\quad  w(v,m)= v^{\frac{r}{B(\alpha)}} m^{r},
\end{equation}
     where $A, B$ are defined in \eqref{eq :  def A, B}. By the same Lemmas, $w$ and $z$ are convex if $(s,\alpha)\in \mathcal{S}_0$ and $(r,\alpha)\in \mathcal{S}_1$.
\begin{proposition}
\label{prop:invariant-dom}
        Assume that  $(s,\alpha)\in \mathcal{S}_0$ and $(r,\alpha)\in \mathcal{S}_1$ and that the initial conditions in \eqref{icc} for \eqref{eq: cff mfg vis}
        take values in a compact subset set, $\mathcal{K}$, of $\mathcal{A}$
        and
        satisfy $w(v_0, m_0) < M$ and $z(v_0, m_0) < M$ for some $M>0$. Then, the solution $(v^{\varepsilon}, m^{\varepsilon})$ of (\ref{eq: cff mfg vis}) satisfies 
 \[
  w(v^{\varepsilon}, m^{\varepsilon}) < M  \qquad\text{and} \qquad  z(v^{\varepsilon}, m^{\varepsilon}) < M
\]
for all $t\in [0,T_{\infty})$. 
Moreover,   
there exists $\tilde c_0$ such that 
   $m^{\varepsilon}(t,x)\geq \tilde c_0>0$ on $\mathbb{T}$ for $t\in [0,T_{\infty})$. Furthermore, $v(x,t)>0$
   for $t\in [0,T_{\infty})$.
\end{proposition}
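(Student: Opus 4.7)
The plan is to adapt the classical invariant region method of Chueh--Conley--Smoller. Setting $U=(v^\varepsilon,m^\varepsilon)$ and $F(U)=(v^2/(2m^\alpha),-v/m^{\alpha-1})$, for any smooth $\Phi(v,m)$ the chain rule applied to \eqref{eq: cff mfg vis} produces the identity
\begin{equation*}
\Phi(U)_t+\nabla\Phi(U)\cdot DF(U)\,U_x=\varepsilon\,\Phi(U)_{xx}-\varepsilon\,U_x^{\top}D^{2}\Phi(U)\,U_x.
\end{equation*}
When $\Phi$ is a Riemann invariant, $\nabla\Phi$ annihilates one of the right eigenvectors of $DF$, so in $2\times 2$ it is automatically a left eigenvector of $DF$ with the \emph{other} eigenvalue; the transport term thus collapses to $\nabla\Phi\cdot DF\,U_x=\mu\,\Phi(U)_x$ for some $\mu\in\{\lambda_1,\lambda_2\}$. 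Combined with the convexity of $w$ and $z$ provided by Lemmas \ref{lem:riemann invariant z} and \ref{lem:riemann invariant w} under the standing hypotheses $(s,\alpha)\in\mathcal{S}_0$ and $(r,\alpha)\in\mathcal{S}_1$, this yields the scalar parabolic differential inequalities
\begin{equation*}
w_t+\mu_w\,w_x\leq\varepsilon\,w_{xx},\qquad z_t+\mu_z\,z_x\leq\varepsilon\,z_{xx}.
\end{equation*}

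The weak maximum principle on the torus then delivers the level-set bounds. Since the initial data are continuous on the compact $\Tt$ and $w(v_0,m_0),z(v_0,m_0)<M$ strictly, their maxima over $\Tt$ are realised and are strictly less than $M$, so
\begin{equation*}
\max_{x\in\Tt}w(U(\cdot,t))\leq\max_{x\in\Tt}w(v_0,m_0)<M,
\end{equation*}
and similarly for $z$; hence $w<M$ and $z<M$ throughout the existence interval.

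Next, I would translate these two bounds into a pointwise lower bound on $m^\varepsilon$. Because $s<0$, $z<M$ rearranges to $v\,m^{A(\alpha)/2}>K_1:=M^{-A(\alpha)/|s|}$; because $r<0$ and $B(\alpha)<0$ (so $r/B(\alpha)>0$), $w<M$ rearranges to $v<K_2\,m^{-B(\alpha)/2}$ with $K_2:=M^{B(\alpha)/r}$. Combining the two produces
\begin{equation*}
K_1\,m^{-A(\alpha)/2}<v<K_2\,m^{-B(\alpha)/2},
\end{equation*}
which forces $m^{(A(\alpha)-B(\alpha))/2}>K_1/K_2$. Since $A(\alpha)-B(\alpha)=2\sqrt{4-2\alpha+\alpha^2}>0$, this yields the uniform bound $m^\varepsilon(x,t)\geq\tilde c_0:=(K_1/K_2)^{2/(A(\alpha)-B(\alpha))}>0$.

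Finally, for the positivity of $v^\varepsilon$ I rewrite the first equation of \eqref{eq: cff mfg vis} in non-conservative form
\begin{equation*}
v_t-\varepsilon\,v_{xx}+\frac{v}{m^\alpha}\,v_x+c(x,t)\,v=0,\qquad c(x,t):=-\frac{\alpha\,v\,m_x}{2\,m^{\alpha+1}}.
\end{equation*}
On any slab $[0,T]\times\Tt$ with $T<T_\infty$ the classical regularity together with the lower bound on $m$ keeps all coefficients bounded, so this is a linear scalar parabolic equation; setting $\tilde v:=e^{Kt}v$ with $K:=\|c\|_{L^\infty}$ renders the zero-order coefficient nonnegative and the minimum principle then gives $v^\varepsilon(x,t)\geq e^{-Kt}\min_x v_0(x)>0$. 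The main subtlety is that the maximum-principle bound on the Riemann invariants and the positivity of $(v,m)$ are mutually dependent — $w$ and $z$ are smooth only where $(v,m)\in\mathcal{A}$ — and is resolved by a standard continuation argument applied to $t^*:=\sup\{t\in[0,T_\infty):(v^\varepsilon,m^\varepsilon)(\cdot,s)\in\mathcal{A}\text{ for all }s\in[0,t]\}$, the bounds just obtained precluding $(v^\varepsilon,m^\varepsilon)$ from exiting $\mathcal{A}$ at $t^*$.
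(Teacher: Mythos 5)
Your proposal is correct and follows essentially the same route as the paper: the convexity of the Riemann invariants $w$ and $z$ turns them into subsolutions of linear parabolic equations, the maximum principle preserves the bounds $w,z<M$, and intersecting the two sublevel sets yields the lower bound on $m$ and the positivity of $v$. You additionally make explicit the algebra behind the paper's level-set picture (your $\tilde c_0$ agrees with the paper's stated bound $M^{(A(\alpha)r-B(\alpha)s)/((A(\alpha)-B(\alpha))rs)}$ up to the normalization $m^{s/2}$ versus $m^{s}$ of the invariants), give a quantitative minimum-principle argument for $v>0$ where the paper simply reads it off from $z<M$, and supply the continuation argument keeping $(v,m)$ in $\mathcal{A}$ --- details the paper leaves implicit.
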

\begin{proof}
First, using (\ref{eq: cff mfg vis}), we get
\begin{equation*}
        w(v,m)_t+\lambda_1 w(v,m)_x=\varepsilon\left(  w(v,m)_{xx}-(v_x,m_x)^T D^2w(v,m) (v_x,m_x)\right) 
\end{equation*}
and
\begin{equation*}
z_t(v,m)+\lambda_2 z_x(v,m)= \varepsilon\left(  z(v,m)_{xx}-(v_x,m_x)^T D^2z(v,m) (v_x,m_x)\right). 
\end{equation*}
 Here, $\lambda_1$ and $\lambda_2$ are eigenvalues as obtained in Proposition \ref{prop: eigenvalues and eigenvectors}. Since $z$ and $w$ are convex, we obtain 
\begin{align*}\label{eq: heat ineq w}
w(v,m)_t+\lambda_1 w(v,m)_x& \leq \varepsilon w(v,m)_{xx}\\
z(v,m)t+\lambda_2 z(v,m)_x& \leq \varepsilon z(v,m)_{xx}.
\end{align*}
 Finally, we use the maximum principle to get that, if $w(v_0, m_0) < M$ and $z(v_0, m_0) < M$ for some $M>0,$ then the solution, $(v^{\varepsilon}, m^{\varepsilon})$, of (\ref{eq: cff mfg vis}) satisfies
  $$w(v^{\varepsilon}, m^{\varepsilon}) < M  \qquad\text{and} \qquad  z(v^{\varepsilon}, m^{\varepsilon}) < M.$$ 
Next, we observe that $z,w<M$, with $z$ and $w$ given in \eqref{fzm},
implies that $m$ is bounded by below,
as can be seen from the level sets of $z$ and $m$
depicted in Figure \ref{lsts}.

        	\begin{figure}[ht]
        	\centering
        	\subfigure[Level sets of $z$.]{
        		\includegraphics[width=5cm,height=5cm]{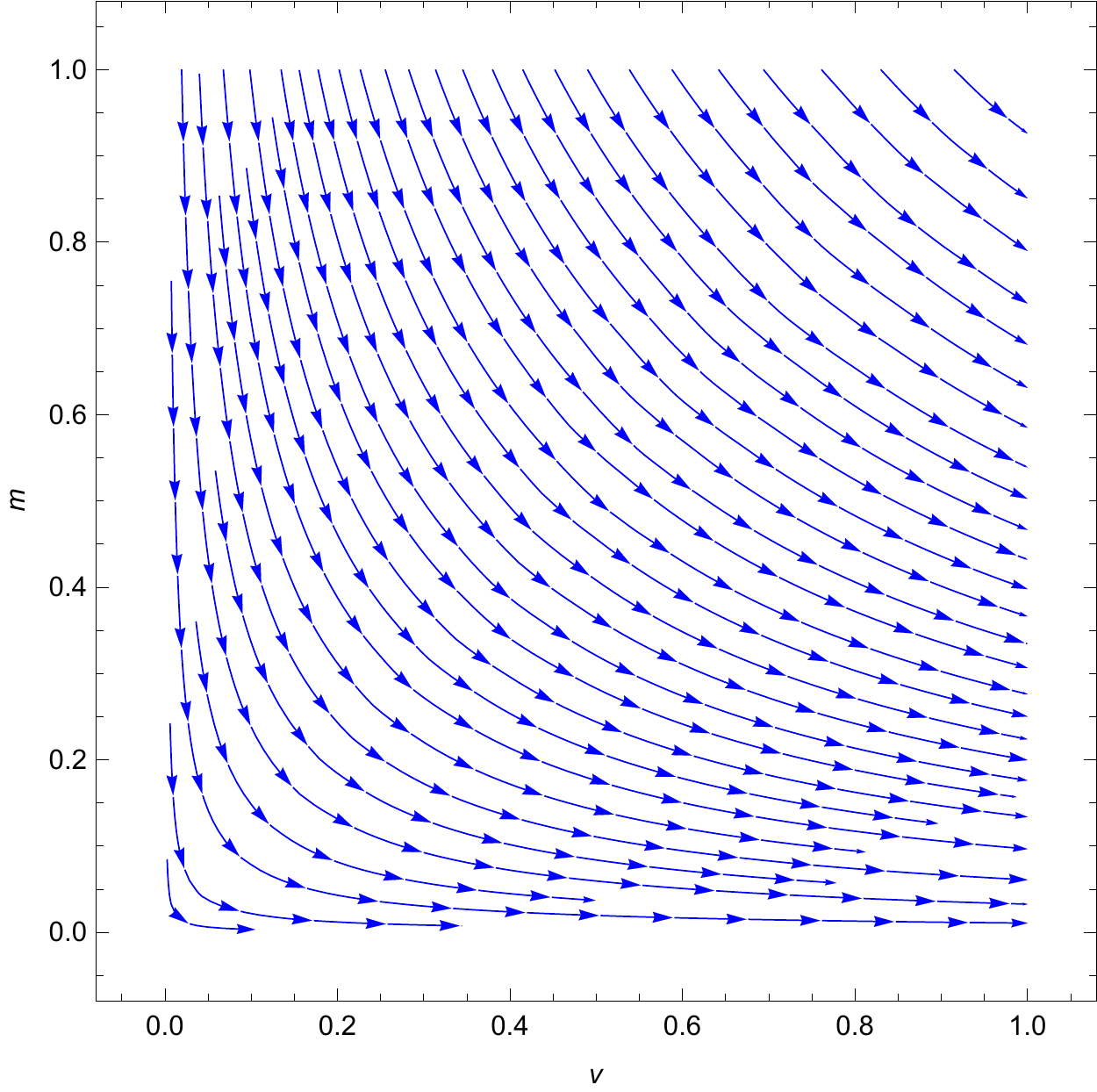}
        	} 
        	\subfigure[Level sets of $w$.]{\includegraphics[width=5cm,height=5cm]{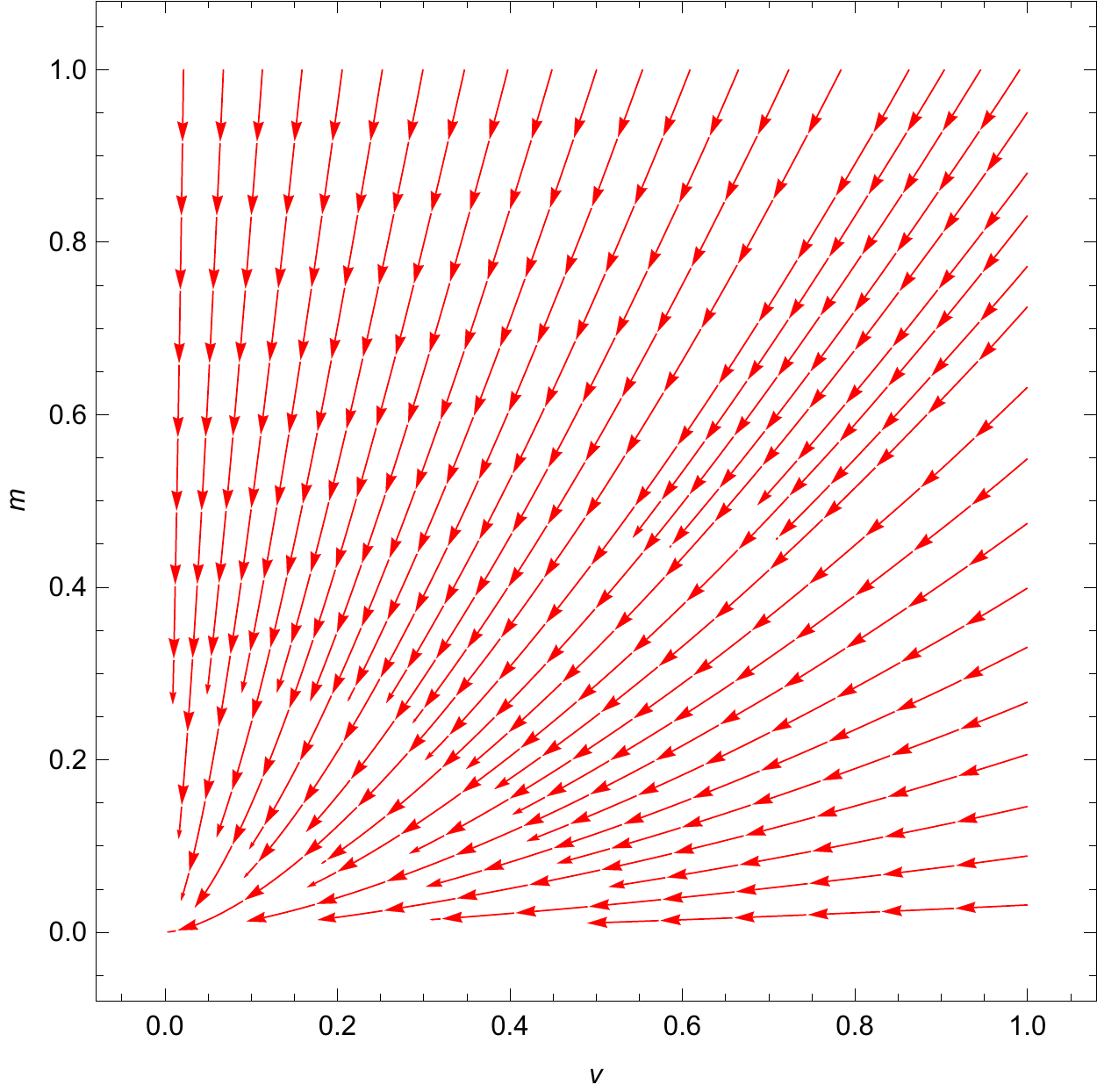}        }
        	\caption{Level sets of the Riemann invariats for \eqref{eq: cff mfg vis} with $\alpha=\frac 3 2$.}
        	 \label{lsts}
        	\end{figure}
In particular, $m$
satisfies
\[
m> M^{\frac{A(\alpha) r - B(\alpha) s}{(A(\alpha) - B(\alpha)) r s}}, 
\]
where the preceding lower bound is determined by the value of $m$ 
corresponding to $z=w=M$. 
	
Finally, if $v_0>0$, the condition $z<M$ gives that $v(x,t)>0$ for all $t$. 
 \end{proof}

Next, we combine the lower bound from the preceding lemma with the entropy
from Lemma \ref{elem} to improve the integrability of $v$. 

\begin{lem}
        Let $v,m \in C^2(\Tt \times (0,T_{\infty}))\cap C(\Tt \times (0, T_{\infty}))$ be the classical solution for \eqref{eq: cff mfg vis} and \eqref{icc} (we drop the index $\varepsilon$ for simplicity). Let $\eta$ be a smooth function satisfying  \eqref{eq: entropy/entflux 1}. Then,
         \begin{equation}\label{eq: time-derivative-entropy}
        \frac{d}{dt}\int_{\mathbb{T}} \eta(v, m) dx= -\varepsilon \int_{\mathbb{T}} (v_x, m_x)^T D^2\eta(v, m)(v_x, m_x)dx.
        \end{equation}
        Furthermore, assume that $m(\cdot,t)\in L^p(\Tt)$ for $0\leq t< T_{\infty}$ and that  $\eta(v,m)=v^am^b$ with $a>1$ and $b$ given by \eqref{b for convexity of eta}. Then, 
         \begin{equation}\label{eq: L^p-bound-on-v0}
         \int_{\mathbb{T}} v^{\frac{a p}{p-b}} (x,t) dx\leq     C
         \end{equation}
         for  $0\leq t< T_{\infty}$. 
\end{lem}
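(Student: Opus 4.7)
The plan is to establish the two assertions in sequence: the entropy identity by a direct computation on the viscous system, and the $L^p$-bound by combining it with a Hölder interpolation and the uniform lower bounds from Proposition \ref{prop:invariant-dom}.

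For the identity \eqref{eq: time-derivative-entropy}, I would write the viscous system in conservative form $U_t+F(U)_x=\varepsilon U_{xx}$ with $U=(v,m)$ and $F(U)=\left(v^2/(2m^\alpha),-v/m^{\alpha-1}\right)$, differentiate $\int_{\Tt}\eta(U)\,dx$ in time, and exchange $\partial_t$ with the integral. The entropy/entropy-flux relation \eqref{eq: entropy/entflux 0} converts the transport contribution into an exact derivative, $\nabla\eta(U)\cdot F(U)_x=q(U)_x$, which integrates to zero on the torus. For the viscous part, the chain rule provides
\[
\nabla\eta(U)\cdot U_{xx}=\partial_{xx}\eta(U)-(v_x,m_x)^{T}D^2\eta(U)(v_x,m_x),
\]
and $\partial_{xx}\eta(U)$ likewise integrates to zero by periodicity, leaving exactly \eqref{eq: time-derivative-entropy}. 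The differentiation under the integral sign is legitimate by the assumed $C^2$ regularity of $(v,m)$ together with the strict positivity of $m$ (and of $v$) guaranteed by Proposition \ref{prop:invariant-dom}.

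For the bound \eqref{eq: L^p-bound-on-v0}, I would first control $\int\eta(v,m)\,dx$ using this identity. Taking $\eta=v^a m^b$ with $a>1$ and $b=b(\alpha,a)$ as in \eqref{b for convexity of eta}, Remark \ref{r22} gives $b<0$ and ensures that \eqref{eq: cond on a,b for convexity} holds; since the solution stays in the positive quadrant by Proposition \ref{prop:invariant-dom}, $\eta$ is convex on its actual range, so $D^2\eta$ is positive semi-definite pointwise and the right-hand side of \eqref{eq: time-derivative-entropy} is non-positive. Hence $t\mapsto\int_{\Tt}v^a m^b\,dx$ is non-increasing, and is bounded by its initial value $C_0:=\int_{\Tt}v_0^a m_0^b\,dx<\infty$, which is finite because $(v_0,m_0)$ takes values in the compact set $\mathcal{K}\subset\mathcal{A}$.

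To conclude, I would interpolate between this weighted bound and the $L^p$ hypothesis on $m$. Since $b<0$, the exponents $q_1=(p-b)/p$ and $q_2=(p-b)/(-b)$ are both strictly larger than $1$ and conjugate, and the factorization
\[
v^{\frac{ap}{p-b}}=\bigl(v^a m^b\bigr)^{p/(p-b)}\cdot m^{-bp/(p-b)}
\]
together with Hölder's inequality gives
\[
\int_{\Tt}v^{\frac{ap}{p-b}}\,dx\leq\left(\int_{\Tt}v^a m^b\,dx\right)^{p/(p-b)}\left(\int_{\Tt}m^p\,dx\right)^{-b/(p-b)}\leq C,
\]
which is \eqref{eq: L^p-bound-on-v0}. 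The one subtle point is verifying convexity of $\eta$ on the range of the solution; this is where the lower bounds of Proposition \ref{prop:invariant-dom} are essential, since $v^a m^b$ is smooth and convex only on the positive quadrant. Beyond that, the argument reduces to an entropy dissipation identity followed by a bookkeeping check on the Hölder exponents.
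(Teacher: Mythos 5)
Your proposal is correct and follows essentially the same route as the paper: the entropy/entropy-flux relation plus periodicity gives the dissipation identity, convexity of $v^a m^b$ (from Remark \ref{r22} and the positivity guaranteed by Proposition \ref{prop:invariant-dom}) makes $t\mapsto\int_{\Tt}v^a m^b\,dx$ non-increasing, and the same H\"older factorization with conjugate exponents $\tfrac{p-b}{p}$ and $\tfrac{p-b}{-b}$ yields \eqref{eq: L^p-bound-on-v0}. The only cosmetic difference is that you spell out the derivation of \eqref{eq: time-derivative-entropy}, which the paper merely asserts.
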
        
        \proof
        Because
        $\eta$ is an entropy for \eqref{eq: cff mfg_ cl},
        we have
        \eqref{eq: time-derivative-entropy}.
        Moreover, according to Remark \ref{r22}, $\eta(v,m)=v^am^b$ is convex. Thus, \eqref{eq: time-derivative-entropy} implies that
        \begin{equation}\label{eq: bound-on-entropy}
        \int_{\mathbb{T}} v^a(x,t) m^b(x,t) dx\leq     \int_{\mathbb{T}} v^a(x,0) m^b(x,0) dx.
        \end{equation}
        Because $m(x,t)$ is bounded away from $0$  for all $0\leq t <T_{\infty}$  and $x\in \Tt$, by Proposition \ref{prop:invariant-dom} and because $b<0$, we have
                \begin{equation}\label{eq: L_infty_bound-on-m_inverse}
                  \Big\vert\Big\vert m^b(x,t) \Big\vert \Big\vert_{L^{\infty}(\mathbb{T})}\leq C_1.
                \end{equation}
        By H\"older's inequality, 
         \begin{equation}\label{eq: L^p-bound-on-v-by-holder}
        \int_{\mathbb{T}} v^{\frac{a p}{p-b}}(x,t) dx\leq  \left( \int_{\mathbb{T}} v^{a}m^{b}dx\right)^{\frac{p}{p-b}}  \left( \int_{\mathbb{T}} m^{p}(x,t) dx\right)^{\frac{-b}{p-b}}.
        \end{equation}
        Finally, we combine \eqref{eq: bound-on-entropy}, \eqref{eq: L_infty_bound-on-m_inverse}, and \eqref{eq: L^p-bound-on-v-by-holder} to obtain \eqref{eq: L^p-bound-on-v0}.
\endproof

By considering the limit $a\to \infty$, we obtain the following corollary.
\begin{cor}
\label{c33}
If
$m(\cdot,t)\in L^p(\Tt)$ for $0\leq t< T_{\infty}$ then 
\begin{equation}\label{eq: L^p-bound-on-v}
\int_{\mathbb{T}} v^{\tilde p} (x,t) dx\leq     C
\end{equation}
for all $\tilde p \leq \frac{p}{\theta(\alpha)}$, where $\theta(\alpha)$ is given by \eqref{tda}. 
\end{cor}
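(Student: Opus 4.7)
The plan is to derive Corollary \ref{c33} from Lemma's bound \eqref{eq: L^p-bound-on-v0} by sending $a\to\infty$ with $b=b(\alpha,a)$ chosen as in \eqref{b for convexity of eta}, and then identifying the limiting exponent via \eqref{ limit of b/a}. The first step is to show that
\[
\lim_{a\to\infty}\frac{ap}{p-b(\alpha,a)}=\lim_{a\to\infty}\frac{p}{p/a+(-b(\alpha,a))/a}=\frac{p}{\theta(\alpha)},
\]
where the last equality follows directly from \eqref{ limit of b/a} (the term $p/a$ vanishes and $-b/a\to\theta(\alpha)$).

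Given $\tilde p<p/\theta(\alpha)$, the first step allows me to pick $a$ large enough that $ap/(p-b)>\tilde p$. Since $\mathbb{T}$ has unit measure, Hölder's inequality gives
\[
\int_{\mathbb{T}} v^{\tilde p}\,dx\;\leq\;\left(\int_{\mathbb{T}} v^{\frac{ap}{p-b}}\,dx\right)^{\tilde p(p-b)/(ap)},
\]
and the right-hand side is bounded by \eqref{eq: L^p-bound-on-v0}. This already settles the strict inequality case $\tilde p<p/\theta(\alpha)$.

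For the endpoint $\tilde p=p/\theta(\alpha)$, I would take a monotone limit along a sequence $a_n\to\infty$ and apply Fatou's lemma, together with the fact that the constant $C=C(a)$ in \eqref{eq: L^p-bound-on-v0} remains bounded as $a\to\infty$. To see this, track the constant through the proof of the preceding lemma: using \eqref{eq: L^p-bound-on-v-by-holder}, one has
\[
\int_{\mathbb{T}} v^{ap/(p-b)}\,dx\;\leq\;\Big(\int_{\mathbb{T}} v_0^a m_0^b\,dx\Big)^{p/(p-b)}\Big(\int_{\mathbb{T}} m^p\,dx\Big)^{-b/(p-b)},
\]
and since $(v_0,m_0)$ lies in a compact subset $\mathcal{K}\subset\mathcal{A}$, bounding $v_0\leq V_0$ and $m_0\geq m_0^{\min}>0$ (using $b<0$) yields
\[
\Big(\int_{\mathbb{T}} v_0^a m_0^b\,dx\Big)^{p/(p-b)}\leq V_0^{ap/(p-b)}(m_0^{\min})^{bp/(p-b)}.
\]
As $a\to\infty$, the exponent $p/(p-b)\to 0$ so $ap/(p-b)\to p/\theta(\alpha)$ and $bp/(p-b)\to -p$, and these factors converge to finite limits; the $(\int m^p)^{-b/(p-b)}$ term converges to $\int m^p$ since $-b/(p-b)\to 1$. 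Hence the upper bound stays controlled uniformly in $a$.

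The main obstacle is this bookkeeping: ensuring the constant on the right of \eqref{eq: L^p-bound-on-v0} does not explode with $a$, which requires precisely the compactness of $\mathcal{K}$ combined with the explicit asymptotic $-b/a\to\theta(\alpha)$ so that the rescaled exponents $p/(p-b)$ and $-b/(p-b)$ have finite limits $0$ and $1$. Once this uniform boundedness is in hand, either a direct Fatou argument on $v^{\tilde p_n}\to v^{p/\theta(\alpha)}$ with $\tilde p_n=a_np/(p-b(\alpha,a_n))\nearrow p/\theta(\alpha)$, or the Hölder interpolation above combined with continuity of $q\mapsto\|v\|_{L^q}$, delivers \eqref{eq: L^p-bound-on-v} at the endpoint exponent, completing the proof.
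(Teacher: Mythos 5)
Your proposal is correct and follows essentially the same route as the paper, whose proof is simply ``take $a\to\infty$ and use \eqref{ limit of b/a}''; you supply the details (the limit $ap/(p-b)\to p/\theta(\alpha)$, H\"older on the unit torus for subcritical exponents, and uniform control of the constants plus Fatou at the endpoint) that the paper leaves implicit.
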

\begin{remark}
For $0<\alpha<2$, we have $0<\theta(\alpha)<1$. Hence, $\frac{p}{\theta(\alpha)}>p$. 
\end{remark}
\begin{proof}
The corollary follows by taking $a\to \infty$ and using \eqref{ limit of b/a}.
\end{proof}

        \begin{lem}
        	\label{l35}
                Let $v,m \in C^2(\Tt \times (0,T_{\infty}))\cap C(\Tt \times (0, T_{\infty}))$ be a classical solution of \eqref{eq: cff mfg vis}-\eqref{icc}. 
                Suppose
                the initial conditions in \eqref{icc} for \eqref{eq: cff mfg vis}
                take values in a compact subset set, $\mathcal{K}$, of $\mathcal{A}$.
                Then,
                \begin{equation}\label{estimate-deriv-int-m}
                \frac{d}{dt}\int_{\mathbb{T}} m^{r}(x,t) dx \leq C
                \end{equation}
         for all $1<r<\infty$ and all $0\leq t< T_\infty$. 
        \end{lem}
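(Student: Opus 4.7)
The plan is to derive an $L^r$ energy identity, absorb the bad transport contribution into the dissipative viscous term via Young's inequality, and then close the estimate through H\"older combined with a bootstrap built on Corollary \ref{c33}. First, I would differentiate $\int_\Tt m^r(x,t)\,dx$ in time, substitute the $m$-equation of \eqref{eq: cff mfg vis}, and integrate by parts twice using periodicity. This yields
\[
\frac{d}{dt}\int_\Tt m^r\,dx = -r(r-1)\int_\Tt v\,m^{r-\alpha-1} m_x\,dx - \varepsilon r(r-1)\int_\Tt m^{r-2} m_x^2\,dx,
\]
in which the second term on the right is the good, non-positive dissipative contribution.

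I would then factor the transport integrand as $(v\,m^{r/2-\alpha})\cdot(m^{r/2-1} m_x)$ and apply Young's inequality with parameter $\varepsilon$. The resulting $m_x^2$-piece is absorbed into half of the dissipation, leaving
\[
\frac{d}{dt}\int_\Tt m^r\,dx + \frac{\varepsilon r(r-1)}{2}\int_\Tt m^{r-2} m_x^2\,dx \leq \frac{r(r-1)}{2\varepsilon}\int_\Tt v^2\,m^{r-2\alpha}\,dx,
\]
so that it suffices to bound the right-hand side uniformly on $[0,T_\infty)$.

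For this last bound I would use H\"older with conjugate exponents $p,q$, producing $\|v\|_{2p}^2\|m\|_{(r-2\alpha)q}^{r-2\alpha}$, and choose $p = 1 + \frac{r-2\alpha}{2\theta(\alpha)}$ so that the two required integrabilities merge into a single $L^s$ control of $m$ with $s = r - 2(\alpha-\theta(\alpha))$. Corollary \ref{c33} then converts the $L^s$ bound on $m$ into the required $L^{2p}$ bound on $v$, while the lower bound $m \geq \tilde c_0 > 0$ from Proposition \ref{prop:invariant-dom} makes $m^{r-2\alpha}$ trivially controlled in the regime $r \leq 2\alpha$. Since a direct algebraic check on the formula \eqref{tda} gives $\alpha - \theta(\alpha) > 0$ on $(0,2)$, we have $s < r$, so starting from the trivial bound $\|m(\cdot,t)\|_1 = 1$ and iterating finitely many times yields the desired estimate for any prescribed $r \in (1,\infty)$.

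The main obstacle is running this bootstrap uniformly on $[0,T_\infty)$. Each iteration furnishes $\frac{d}{dt}\int m^r\,dx \leq C$ with a constant that depends on the sup-in-$t$ of $\|m\|_s$ from the previous level; that quantity is controlled by integrating the earlier derivative bound and grows at worst linearly on finite intervals, so the final constant $C$ in the lemma is allowed to depend on $r$, the initial data, and on $t$ through these propagated bounds. The strictly positive integrability gain $2(\alpha-\theta(\alpha))$ at each step, which reflects precisely the Riemann-invariant lower bound from Proposition \ref{prop:invariant-dom} together with the entropy machinery behind Corollary \ref{c33}, is what makes the bootstrap terminate in finitely many iterations and closes the argument.
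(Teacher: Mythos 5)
Your opening is exactly the paper's: the same $L^r$ identity, the same factorization $(v\,m^{r/2-\alpha})\cdot(m^{r/2-1}m_x)$, and the same Young absorption of half the dissipation, reducing everything to bounding $\int_{\Tt} v^2 m^{r-2\alpha}\,dx$. The divergence is in how that integral is controlled, and your route has a genuine gap. You discard the remaining dissipation and estimate $\int v^2 m^{r-2\alpha}$ by H\"older as $\|v\|_{L^{2p}}^2\|m\|_{L^{(r-2\alpha)q}}^{r-2\alpha}$, feeding $\|v\|_{L^{2p}}$ through Corollary \ref{c33} and bootstrapping on the integrability of $m$. The base case of that bootstrap fails whenever $\theta(\alpha)>\tfrac12$, i.e. $\alpha>\tfrac54$: there your first admissible target exponent lies in the regime $r\le 2\alpha$, where your prescription needs $\|v\|_{L^2}$, but from the only available bound $\|m\|_{L^1}=1$ Corollary \ref{c33} yields merely $v\in L^{1/\theta(\alpha)}$ with $1/\theta(\alpha)<2$; getting $v\in L^2$ would require $m\in L^{2\theta(\alpha)}$ with $2\theta(\alpha)>1$, which is precisely what the bootstrap is supposed to produce --- a circularity. (Equivalently, in the regime $r>2\alpha$ your choice of $p$ at the first step satisfies $2p\,\theta(\alpha)=1$, forcing $p<1$.) A second, lesser defect is that even where the bootstrap runs, each stage's constant involves $\sup_{t<T_\infty}\|m\|_{L^s}$ obtained by integrating the previous stage's derivative bound, so your final $C$ depends on $T_\infty$ and degenerates as $T_\infty\to\infty$, whereas the lemma's bound is uniform in $t$.

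The paper avoids both problems by never separating $v$ from $m$ and by keeping the dissipation in play. It writes $\int v^2 m^{r-2\alpha}=\int m^{r-2\alpha-2b/a}\,(v^a m^b)^{2/a}$ and applies H\"older against the conserved entropy $\int v^a m^b\le C$ (which needs no $L^p$ hypothesis on $m$), giving $\int v^2 m^{r-2\alpha}\le C\left(\int m^r\right)^{\mu}$ with $\mu<1$ for $a$ large, since $r-2\alpha+2\theta(\alpha)<r$. It then invokes Morrey's inequality, $\|m\|_{L^\infty}^r\le C\left(1+\int [(m^{r/2})_x]^2\right)$, and Young's inequality for the sublinear power $\mu$ to absorb this into the retained viscous term $\tfrac12\varepsilon\int m^{r-2}m_x^2$. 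This closes the estimate at a single level of $r$, for all $\alpha\in(0,2)$, with a constant independent of $t$ and $T_\infty$. To repair your argument, replace the detour through Corollary \ref{c33} by this direct H\"older against the entropy, and let the dissipation, rather than a time-integrated bound from a previous bootstrap level, supply the missing integrability of $m$.
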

\proof
Using the second equation in \eqref{eq: cff mfg vis}, we have
$$
\begin{aligned}\label{s}
\frac{d}{dt}\int_{\mathbb{T}} m^{r}(x,t) dx & =\int_{\mathbb{T}}  r m^{r-1}m_t dx\\
                    &=\int_{\mathbb{T}} r m^{r-1}(v m^{1-\alpha} +\varepsilon m_x)_x dx.\\
\end{aligned}
$$
Integrating by parts, we get 
$$
\begin{aligned}\label{s}
\frac{d}{dt}\int_{\mathbb{T}} m^{r}(x,t) dx & =- r(r-1)\int_{\mathbb{T}} m^{r-2}m_x\left( vm^{1-\alpha} +\varepsilon m_x\right) dx\\
&=- r(r-1)\int_{\mathbb{T}} \left( m^{r-2}m_x v m^{1-\alpha}  +\varepsilon m^{r-2}m_x^2 \right) dx.
\end{aligned}
$$
It thus follows that 
\begin{equation}\label{eq: estimate-deriv-int-m}
\frac{d}{dt}\frac{1}{r(r-1)}\int_{\mathbb{T}} m^{r}(x,t) dx \leq\int_{\mathbb{T}} | m^{r-2}m_x v m^{1-\alpha}|  -\int_{\mathbb{T}}\varepsilon m^{r-2}m_x^2  dx.\\
\end{equation}
Next, 
by Cauchy inequality,
\begin{align}\label{eq: Cauchy-estimate}
\int_{\mathbb{T}}| m^{r-2}m_x v m^{1-\alpha}|  dx&= \int_{\mathbb{T}}| m^{r/2-1}m_x v m^{r/2-\alpha}|dx \\
 &\leq \frac{1}{2\varepsilon }\int_{\mathbb{T}} m^{r-2\alpha}v^2dx+\frac{1}{2}\int_{\mathbb{T}} \varepsilon m^{r-2}m_x^2dx. \nonumber
 \end{align}
 We combine \eqref{eq: estimate-deriv-int-m} and \eqref{eq: Cauchy-estimate} to obtain
\begin{equation}\label{eq: estimate-deriv- int-m-01}
\frac{d}{dt}\frac{1}{r(r-1)}\int_{\mathbb{T}} m^{r}(x,t) dx \leq C_\varepsilon \int_{\mathbb{T}} m^{r-2\alpha}v^2 dx-\frac{1}{2}\int_{\mathbb{T}}\varepsilon m^{r-2}m_x^2  dx.
\end{equation}
Now,
for $a>1$ and $b=b(\alpha, a)$ as in \eqref{b for convexity of eta},
we rewrite 
\begin{align*}
&\int_{\mathbb{T}} m^{r-2\alpha} v^2dx =
\int_{\mathbb{T}} m^{r-2\alpha-2\frac b a} (m^b v^a)^{\frac 2 a}\\
&\quad \leq
\left(
\int_{\mathbb{T}} m^{\left(r-2\alpha-2\frac b a\right)\frac{a}{a-2}}\right)^{\frac{a-2}a}
\left(
\int_{\mathbb{T}}
m^b v^a
\right)^{\frac{2}a}	\\
&\quad 
\leq C_{a}
\left(
\int_{\mathbb{T}} m^{\left(r-2\alpha-2\frac b a\right)\frac{a}{a-2}}\right)^{\frac{a-2}a}.
\end{align*}
Next, we notice that as $a\to \infty$, we have
\[
\left(r-2\alpha-2\frac b a\right)\frac{a}{a-2}\to
r-2\alpha+2 \theta(\alpha)<r. 
\]
Accordingly, for large enough $a$, 
\begin{equation}
\label{dgest}
\int_{\mathbb{T}} m^{r-2\alpha} v^2dx
\leq C \left(\int_{\mathbb{T}} m^r\right)^\mu,  
\end{equation}
for some $\mu<1$.

Next, we use Morrey's theorem to obtain that 
\begin{equation*}
\|m\|_{L^{\infty}}^r\leq C\left( 1 + \int_{\mathbb{T}} [(m^{r/2})_x]^2   dx\right).
\end{equation*}
Because
$\mu<1$, Young's inequality yields
\begin{equation*}
C^\mu\left( 1 + \int_{\mathbb{T}} [(m^{r/2})_x]^2   dx\right)^\mu\leq \frac{\varepsilon^2}{2C_0^2} \int_{\mathbb{T}} [(m^{r/2})_x]^2   dx +C_{\varepsilon,\mu}
\end{equation*}
for some $C_{\varepsilon,\mu}$. We combine \eqref{dgest} 
with the preceding inequalities to get
\begin{equation}\label{estimate-int-m-02}
 \int_{\mathbb{T}} m^{(r-2\alpha)} v^2dx \leq \frac{1}{4}\int_{\mathbb{T}}\varepsilon m^{r-2}m_x^2  dx+ C_{\varepsilon,\mu}.
\end{equation}  
Finally, we use \eqref{eq: estimate-deriv- int-m-01} and \eqref{estimate-int-m-02} to obtain
\begin{align}\label{estimate-deriv-int-m-02}
\frac{d}{dt}\frac{1}{r(r-1)}\int_{\mathbb{T}} m^{r}(x,t) dx &\leq  C_{\varepsilon,\mu}
-\frac{1}{4}\int_{\mathbb{T}}\varepsilon m^{r-2}m_x^2  dx.
\end{align}
The estimate \eqref{estimate-deriv-int-m} follows from \eqref{estimate-deriv-int-m-02}.
\endproof

Finally, we prove our main result, the existence of a global solution 
for \eqref{eq: cff mfg vis}.

\begin{teo}
\label{mteo}
Suppose
the initial conditions in \eqref{icc} for \eqref{eq: cff mfg vis}
take values in a compact subset set, $\mathcal{K}$, of $\mathcal{A}$
and
satisfy $w(v_0, m_0) < M$ and $z(v_0, m_0) < M$ for some $M>0$. 
Then, the maximal existence time $T_\infty$ is 
$T_\infty=+\infty$. 
\end{teo}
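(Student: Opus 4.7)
The plan is to run a standard continuation argument: assume for contradiction that $T_\infty < \infty$, then derive uniform a priori bounds on $(v^\varepsilon, m^\varepsilon)$ and their derivatives on $\Tt \times [0, T_\infty)$ strong enough to restart the local existence theorem at a time close to $T_\infty$ and extend the solution past $T_\infty$, contradicting maximality.

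The starting point is Proposition \ref{prop:invariant-dom}, which I invoke directly with the choice of $(r,\alpha) \in \mathcal{S}_1$ and $(s,\alpha) \in \mathcal{S}_0$ needed to make both Riemann invariants convex. It gives the invariant region propagation $w(v^\varepsilon, m^\varepsilon), z(v^\varepsilon, m^\varepsilon) < M$ on $[0,T_\infty)$, and, more importantly, the quantitative lower bound $m^\varepsilon \geq \tilde c_0 > 0$ and the positivity $v^\varepsilon > 0$. This keeps the denominators $m^\alpha$ and $m^{\alpha-1}$ non-singular and the second equation uniformly parabolic.

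Next, I apply Lemma \ref{l35}. Integrating \eqref{estimate-deriv-int-m} in time yields, for every $1 < r < \infty$,
\[
\sup_{0 \le t < T_\infty} \int_\Tt m^r(x,t)\, dx \ \leq\ \|m_0\|_{L^r}^r + r(r-1)\, C\, T_\infty \ <\ \infty.
\]
In parallel, inspection of \eqref{estimate-deriv-int-m-02} gives the space--time bound $\varepsilon\int_0^{T_\infty}\!\!\int_\Tt |(m^{r/2})_x|^2\, dx\, dt \le C$. In one space dimension, the embedding $H^1(\Tt) \hookrightarrow L^\infty(\Tt)$ is continuous, so a Moser-type iteration on $r$ (or equivalently an interpolation between $L^\infty_t L^r_x$ and $L^2_t H^1_x$ controls), carried out carefully so that the constants behave geometrically in $r$, promotes the family of $L^r$ bounds to a uniform $L^\infty$ bound on $m^\varepsilon$ over $\Tt \times [0, T_\infty)$. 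Once $m^\varepsilon$ is bounded above and below, Corollary \ref{c33} immediately delivers $\|v^\varepsilon(\cdot,t)\|_{L^{\tilde p}(\Tt)} \le C_{\tilde p}$ for every finite $\tilde p$, uniformly in $t$.

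To reach an $L^\infty$ bound on $v^\varepsilon$, I freeze $m^\varepsilon$ and read the first equation in \eqref{eq: cff mfg vis} as a scalar parabolic equation for $v^\varepsilon$ with bounded, non-degenerate leading coefficient $\varepsilon$ and a lower-order flux $v^2/(2 m^\alpha)$ whose coefficient is now in $L^\infty$. Since $v^\varepsilon$ lies in $L^p$ for all $p$, a De Giorgi--Nash--Moser iteration (or a Stampacchia level-set argument) applied to the convection--diffusion equation gives $v^\varepsilon \in L^\infty(\Tt \times [0, T_\infty))$. With both $v^\varepsilon$ and $m^\varepsilon$ uniformly bounded and $m^\varepsilon$ uniformly bounded below, the system is strictly parabolic with smooth bounded coefficients, and a standard Schauder bootstrap yields uniform $C^{2+\beta, 1+\beta/2}$ estimates. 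These let me take limits as $t\to T_\infty^-$, re-apply the local existence theorem, and extend $(v^\varepsilon, m^\varepsilon)$ past $T_\infty$, contradicting the definition of $T_\infty$.

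The main obstacle I expect is the $L^r \Rightarrow L^\infty$ upgrade for $m^\varepsilon$: the constant in Lemma \ref{l35} depends on $r$ through both the Cauchy inequality and the application of Young's inequality with exponent $\mu < 1$ coming from Corollary \ref{c33}. Controlling this dependence — in particular ensuring the estimates chain geometrically in $r$ — is the delicate point, and is what genuinely uses the improvement $\tilde p > p$ given by $\theta(\alpha) < 1$ in Corollary \ref{c33}. The subsequent $L^\infty$ bound on $v^\varepsilon$ is softer but still non-trivial because the drift term is quadratic in $v$; one must first exploit the high-integrability of $v$ before invoking the scalar parabolic regularity machinery.
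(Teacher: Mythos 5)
Your proposal follows essentially the same route as the paper: argue by contradiction assuming $T_\infty<\infty$, use Proposition \ref{prop:invariant-dom} for the lower bound on $m$ and positivity of $v$, integrate Lemma \ref{l35} to get uniform $L^r$ bounds on $m$ for all finite $r$, apply Corollary \ref{c33} to get $L^{\tilde p}$ bounds on $v$, and then invoke parabolic regularity to continue the solution past $T_\infty$. The only difference is that you expand the final step (Moser iteration for $m$, De Giorgi--Nash--Moser for $v$, Schauder bootstrap) that the paper compresses into ``a simple regularity argument for parabolic equations,'' which is a legitimate and indeed more careful elaboration of the same argument.
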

\begin{proof}
Suppose that the maximal existence time, $T_\infty$, satisfies $T_\infty<\infty$. 
First, we notice that by the conservation of mass, we have
\[
\sup_{0\leq t<T_\infty}\|m\|_{L^1}=1. 
\]
Thus, using first Lemma \ref{l35}, we obtain that 
$$
\sup_{0\leq t<T_\infty}\|m(\cdot, t)\|_{L^{p}(\Tt)}< C
$$
for all $p<\infty$.
Next, using Corollary \ref{c33}, we obtain
\[
\sup_{0\leq t<T_\infty}\|v(\cdot, t)\|_{L^{q}(\Tt)} <C
\]
for all $q<\infty$. Thus, the solution $(v,m)$ is bounded in $L^p\times L^q$
uniformly in $t$ up to $T_\infty$. Finally, a simple regularity argument for 
parabolic equations gives that the solution is classical up $T_\infty$ and,
thus, can be continued for $t>T_\infty$, which contradicts the maximality of $T_\infty$.
\end{proof}

 \section{Traveling waves}
 
 In this final section, we compute traveling waves for forward-forward MFGs and for MFGs with congestion with an anti-monotonic coupling. For forward-forward MFGs, the existence of traveling waves shows that these PDEs may fail to converge to a stationary solution.
 Similarly,
 for MFGs with congestion, the existence of traveling waves indicates that without monotonicity, convergence to a stationary solution may as well not hold. As far as we know, these are the first examples of traveling waves in MFGs.

\subsection{Traveling waves for forward-forward congestion MFGs} 
 
 We consider the following forward-forward congestion problem:
 \begin{equation}\label{power coupling FF}
  \begin{cases}
        v_t+\left( v^2/(2m^{\alpha}) -K m^{\alpha} \right) _x=0\\
        m_t-(m^{1-\alpha} v)_x=0,      
         \end{cases}
 \end{equation}
 with $K>0$. 
It is straightforward to check that for smooth initial data $m_0, v_0$ such that $v_0=c m_0^{\alpha}$
 with $c=\pm \sqrt{\frac{2 K}{3}}$,
  we have that
\[
m(x,t)=m_0(x+c t)\quad \hbox{and}\quad v(x,t)=c m^{\alpha}(x,t)
\]
solve \eqref{power coupling FF}.

\subsection{Traveling waves for non-monotonic MFGs with congestion} 

Now,  we consider the following non-monotonic MFG with congestion:
 \begin{equation}\label{MFG cong alpha}
 \begin{cases}
 -v_t+\left( v^2/(2m^{\alpha}) +K m^{\alpha} \right) _x=0\\
 m_t-(m^{1-\alpha} v)_x=0,        
 \end{cases}
 \end{equation}
where $K>0$. When $\alpha=1$, then 
 \begin{equation}\label{MFG cong alpha=1}
 \begin{cases}
 v_t+\left( v^2/(2m)+K m \right) _x=0\\
 m_t- v_x=0.        
 \end{cases}
 \end{equation}
 This congestion problem admits traveling wave solutions:
 \[ 
m(t,x)= m_0(x-\sqrt{2K}t)\quad\hbox{and}\quad v(t,x)= -v_0(x-\sqrt{2K}t)
\]
and
 \[ 
 m(t,x)= m_0(x+\sqrt{2K}t)\quad\hbox{and}\quad v(t,x)= v_0(x+\sqrt{2K}t),
 \]
which solve \eqref{MFG cong alpha=1} if the initial data   $(m_0, v_0)$  is smooth and satisfies $v_0=\sqrt{2K}m_0$.

\subsection{Potentials}

For $\alpha=1$, the two preceding MFGs can be converted into
to a scalar equation by introducing a potential function. In this last section, we record these remarkable equations. 

The second equation in \eqref{power coupling FF} 
becomes
\[
m_t-v_x=0. 
\]
Thus, we introduce a potential, $q$, such that $m=q_x$ and $v=q_t$. 
Accordingly, the first equation in  \eqref{power coupling FF} 
becomes
\[
-\left(K +\frac{ q_t^2}{2
        q_x^2}
\right)q_{xx}
+\frac{ q_t}{q_x}q_{xt}+q_{tt}=0,  
\]
which is a wave-type equation for $q$.

For the non-monotonic MFG, \eqref{MFG cong alpha=1} and $q$ such that $m=q_x$ and $v=q_t$, we have
\[
\left(K -\frac{ q_{t}^2}{2
        q_{x}^2}\right)q_{xx} +\frac{ q_{t}}{q_x}q_{xt}+q_{tt}=0.
\]



\end{document}